\theoremstyle{plain}
\newtheorem{theorem}{Theorem}
\newtheorem{lemma}[theorem]{Lemma}
\theoremstyle{definition}
\theoremstyle{remark}
\journal{Discrete Mathematics}
\begin{document}

\begin{frontmatter}



\title{\bf Winning Strategies in Multimove Chess (i,j)}


\author{Emily Berger \qquad Alexander Dubbs}

\address{Department of Mathematics\\[-0.8ex]
Massachusetts Institute of Technology\\[-0.8ex] 
emilyritaberger@gmail.com \qquad alex.dubbs@gmail.com\\}

\begin{abstract}
 We propose a class of chess variants, Multimove Chess (i,j), in which White gets i moves per turn and Black gets j moves per turn. One side is said to win when it takes the opponent's king. All other rules of chess apply. We prove that if (i,j) is not (1,1) or (2,2), and if $i \geq \min(j,4)$, then White always has a winning strategy, and otherwise Black always has a winning strategy.
\end{abstract}

\begin{keyword}
chess \sep chess with multiple moves per turn \sep chess variants \sep chess problems \sep Marseillais chess
\end{keyword}

\end{frontmatter}

\maketitle

\section{Introduction}

Chess has driven substantial research in discrete mathematics. Numerous lay periodicals publish mate-in-$k$-moves puzzles, but much chess research is more sophisticated. The $n$-queens problem is a classic: place $n$ queens on an $n\times n$ chessboard such that none attack each other. This problem has been generalized to $d$-dimensional chessboards \cite{Barr2006}. In \cite{Grinstead1991} the authors find an upper bound on the minimum number of queens required to inhabit or attack every square on an $n\times n$ board. The knight's tour problem is also famous: how many ways can a knight visit every square on a chess board exactly once? The answer was found by \cite{Lobbing1996} and has been adapted to various modified chess boards, listed in and added to by \cite{DeMaio2007}. Rook polynomials count the number of ways that $k$ rooks can be placed on a subset of the chess board with none attacking each other, and they are related to the Laguerre orthogonal polynomials \cite{Gessel2013}. In \cite{Elkies2005}, the author discusses old and new enumerative chess problems, in which one counts the solutions to a chess problem that can be solved in more than one way.

Chess has also influenced much academic research in artificial intelligence.  Now computers can beat anyone in chess by brute force \cite{Max2011}, and can even play perfect checkers \cite{Max2011}. However, no perfect chess algorithm has ever been found, and finding one is a longstanding open problem.

Chess variants are as old as the game itself, and a number are listed in \cite{Wiki2014} and \cite{Var2014}. We propose algorithms for playing certain variants that are guaranteed to win regardless of their opponents' strategies.

We introduce a class of chess variants, ``Multimove chess (i,j).'' For a given (i,j) pair, Multimove Chess (i,j) is a game with exactly the same rules as standard chess except that White gets i moves per turn and Black gets j moves per turn. We say that the game is won when one side takes the other's king. With the exceptions of (1,1) and (2,2), we prove that Multimove Chess (i,j) can always be won by White or by Black (no stalemates occur). (1,1) is regular chess, and we conjecture that (2,2) is a first player game, and that this hypothesis can be proven by a computer. The following table summarizes our findings; ``White'' means that White can always win for a given (i,j), ``Black'' means that Black can always win for a given (i,j), and a ``?'' denotes an open problem.

\begin{center}
\begin{tabular}{|c|c|c|c|c|c|}
\hline
 Black/White & $j=1$ & $j = 2$ & $j = 3$ & $j = 4$ & $j > 4$\\
\hline
$i=1$ & ? & Black & Black & Black & Black\\
\hline
$i=2$ & White & ? & Black & Black & Black\\
\hline
$i=3$ & White & White & White & Black & Black\\
\hline
$i = 4$ & White & White & White & White & White\\
\hline
$i > 4$ & White & White & White & White & White\\
\hline
\end{tabular}
\end{center}
\vskip .1 in

\begin{theorem} If $(i,j) \neq (1,1)$ and $(i,j) \neq (2,2)$, and if $i \geq \min(j,4)$, Multimove Chess (i,j) can always be won by White. Otherwise it can always be won by Black. \end{theorem}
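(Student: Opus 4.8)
The plan is to prove the theorem by a case analysis following the table, splitting the pairs $(i,j)$ --- other than the degenerate $(1,1)$ and $(2,2)$, which are left open --- into four regions and giving a tailored winning recipe for each. Two of the regions are settled by an explicit king capture on the very first turn; the other two, the tight ones straddling the White/Black boundary, are settled by a tempo argument. Concretely the regions are: (i) $i\ge 4$, White wins; (ii) $i\le 3$ and $j\ge 4$, Black wins; (iii) $i\le 3$, $j\le 3$ and $i\ge j$, White wins; (iv) $i\le 3$, $j\le 3$ and $i<j$, Black wins. One checks that these four regions partition the non-degenerate pairs and that membership in (i) or (iii) is exactly $i\ge\min(j,4)$.

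First I would dispose of region (i) all at once. When $i\ge 4$, White has at least four moves on the opening turn, and the knight tour Nb1--c3, Nc3--d5, Nd5--f6, followed by capturing Black's king on e8, wins outright: each intermediate square is empty in the initial array, f6 is a knight's move from e8, White's own king is never attacked so every move is legal, and any surplus moves when $i>4$ are moot since the game ends the instant a king is captured. This one line settles the bottom two rows of the table for every $j$.

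For region (ii) --- the pairs $(1,j),(2,j),(3,j)$ with $j\ge 4$ --- the plan is to have Black capture White's king on Black's \emph{first} turn via the mirrored tour Nb8--c6, Nc6--d4, Nd4--f3, then take the king on e1. Since $i\le 3<4$, White cannot capture Black's king on the preceding turn, so play does reach Black's first turn, and White's single turn of at most three moves cannot prevent the execution. Indeed, for $i\le 2$ White cannot even give check on the opening turn, so Black's tour runs unhindered; and in general Black has several essentially disjoint four-move routes into e1 --- entering it from f3, from d3, or from c2 --- so no three White moves can block or intercept them all, nor shepherd White's king (hemmed in at the start by its own pieces) out of range. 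The one real subtlety is $(3,4)$, where White's sole disruptive resource is to give check at the cost of material, such as Nc3--d5--f6+, forcing Black to spend a move in reply; but then Black emerges ahead in material with a permanent one-move-per-turn surplus, and a short continuation --- march the surviving knight toward e1 while White can neither shelter its king nor muster counterplay --- wins. Verifying the route count and this sub-case is the finite, if laborious, core of region (ii).

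Regions (iii) and (iv) are the pairs where neither side can finish on move one. For region (iii), $(2,1),(3,1),(3,2),(3,3)$, I would use that White moves first and has $i\ge j$: when $i>j$ White can each turn pose more king-capture threats than Black has moves to answer, and when $i=j=3$ White forces through an accelerated mating attack --- Scholar's-mate-style pressure on f7 with queen and bishop, or a forcing central knight sortie --- and one verifies that Black's three moves cannot both parry it and generate counterplay. For region (iv), $(1,2),(1,3),(2,3)$, Black trails by White's single opening tempo but leads by at least one move per turn ever after, and the plan is to maintain the invariant ``White to move, facing a king-capture threat that consumes all $i$ of White's moves, with Black still holding a spare move'': the spare move each turn re-poses the threat (replace a captured attacker, re-block a square, or chase the exposed king), while White's own attack never ignites since White develops strictly slower ($i<j$); as White has only finitely many interposing pieces and shelter squares, a breakthrough is forced. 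The obstacle I anticipate is exactly here: the ``first-turn'' regions reduce to checking explicit move sequences --- even $(3,4)$ is a bounded search --- but regions (iii) and (iv) require showing that a \emph{permanent} one-move-per-turn edge overrides a one-time first-move advantage; for (iv) this means proving the invariant genuinely maintainable with no saving counter-race for White, and for (iii) pinning down a forced White win for $(3,3)$ on the strength of the first move alone --- the analogous claim for $(2,2)$ being, apparently, just out of reach, hence left as a conjecture.
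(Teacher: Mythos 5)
Your overall decomposition is exactly the paper's (first-turn win for $i\ge 4$; first-turn loss for $i\le 3,\ j\ge 4$; then the finitely many boundary cases), and your region (i) line Nb1--c3--d5--f6$\times$e8 is a correct four-move king capture, equivalent to the paper's b1--a3--b5--c7--e8. But two things go wrong. First, in region (ii) your argument is organized around several knight routes ``into e1,'' which does not address the case where White spends its $\le 3$ moves pushing a pawn and walking its king away from e1; the fact you actually need is the paper's quantitative one: a Black knight can reach \emph{every} square of the box c1--g3 in at most four moves, while the White king, which must first spend a move on a pawn push, has at most two king moves and so cannot leave that box (nor can White capture a Black knight or the Black king in three moves). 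Your ``(3,4) subtlety'' also rests on a misreading of the variant: since the game is decided by king capture, a check such as Nc3--d5--f6 does not force any reply --- Black simply executes its four-move capture first and the game ends before White's second turn --- and your fallback (``ahead in material, a short continuation wins'') is not an argument.

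The more serious gap is regions (iii) and (iv), which are the actual content of the theorem. For $(2,1),(3,1),(3,2),(3,3)$ and $(1,2),(1,3),(2,3)$ you offer only tempo heuristics (``more threats than answers,'' a maintained-threat invariant) and explicitly defer the verification; but these heuristics are not sound as stated, so nothing is proved. In $(2,3)$, for instance, the claim that ``White's own attack never ignites since White develops strictly slower'' is false: with two moves per turn White immediately generates mate-in-two-turns threats (e-pawn plus Qf3/Qg4/Qh5, c-pawn plus Qb3, or a knight sortie to b5/c4/d5/e4), and Black must meet each such first turn with a specialized opening and a concrete refutation --- this is precisely the long case analysis the paper carries out, including the subcases where White captures Black's c6 knight with knight, bishop, rook, queen, or pawn, or runs its king to d1, e4, f1, g2. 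Likewise $(3,3)$ needs an explicit forcing line (the paper's e3, Qf3, Nc3, followed by an exhaustive check that any three-move defense --- capturing the c3 knight by any of the four possible means, or relocating the king to d8 or f8 --- loses to the queen), not ``Scholar's-mate-style pressure, and one verifies.'' Your own invariant for (iv) is not even established for the first turn, and the open status of $(2,2)$ is a warning that a per-turn move edge versus a first-move edge cannot be adjudicated by general tempo reasoning; each boundary pair requires its own explicit, finite but nontrivial analysis, which is what is missing here.
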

\begin{proof} Combine lemmas 2-10 from Section 2.\end{proof}

\section{Proofs}

We say that ``(i,j) is White'' or ``(i,j) is Black'' if White or Black has a strategy that is guaranteed to win at Multimove Chess (i,j). We recommend reading this section using a chessboard with labeled rows and columns.

\begin{lemma} (2,1) is White. \end{lemma}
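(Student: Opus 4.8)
The plan is to give White an explicit winning line that is finished after White's \emph{second} turn, exploiting the fact that White moves twice before Black moves at all. The target is Black's king, which in the initial array sits on e8 hemmed in by its own pawns on d7, e7, f7 and its own pieces on d8 and f8, and which cannot castle (the squares between king and rook are occupied on both wings). Consequently no single Black move can take the king off e8.

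First I would have White spend its opening turn maneuvering a knight to b5: move one Nb1-c3, move two Nc3-b5. This is completely safe: in the starting position Black attacks nothing past its own third rank, and in particular nothing on b5, so Black can neither capture nor pin this knight, and cannot even attack it except by a pawn push such as advancing the a- or c-pawn, which does not capture it. Now from b5 the knight hops to d6, and from d6 it attacks e8 and can hop to e8. So on White's second turn I would play move one Nxd6 --- read as Nb5-d6 if d6 is empty, or Nb5xd6 if Black has blocked that square with a pawn --- which attacks the Black king, and then move two Nd6xe8, capturing the king and winning.

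The remaining content is the routine verification that no Black reply to White's first turn spoils this, and I expect this to be the only place where care is needed, though not a genuine obstacle. The points to check are: the Black king is still on e8 after any one Black move, since it is immobilized by its own men and cannot castle; a knight leap cannot be obstructed, so Black cannot interpose on the b5--d6 or d6--e8 routes; even the reply advancing the d-pawn to d6, which puts a doubly defended pawn there, fails, because White answers Nb5xd6 and then Nd6xe8, taking the king before Black ever gets to recapture on d6; and Black cannot eliminate or immobilize the b5 knight in a single move, there being no line through b5 and White's king on e1 and no Black piece attacking b5 after White's first turn. Since none of Black's possible moves prevents the capture of the king on White's second turn, (2,1) is White.
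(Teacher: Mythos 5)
Your proof is correct and essentially the same as the paper's: a four-move knight raid on e8 via b5, with the observation that Black's single intervening move can neither capture the b5 knight, obstruct the knight's leaps, nor move the king off e8. The only difference is the choice of intermediate squares (b1--c3--b5--d6--e8 instead of the paper's b1--a3--b5--c7--e8), which changes nothing substantive.
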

\begin{proof} White moves the b1 knight to a3 and then to b5. Then Black makes {\it any move.} Then the White knight moves b5-c7 and then to e8 for checkmate.\end{proof}

\begin{lemma}(3,1) is White.\end{lemma}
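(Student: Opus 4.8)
The plan is to reduce to Lemma 2. From the opening, an extra move per turn cannot hurt White -- White can always tack on a harmless waiting move -- so White should simply run the knight maneuver that wins (2,1). Explicitly: on its first turn White plays the b1-knight to a3, then to b5, and then a neutral third move, say the g1-knight to f3 (a pawn push such as a2-a3 works equally well). On its second turn White plays the b5-knight to c7, capturing the pawn there, and then to e8, capturing Black's king. Only two of White's three moves are needed on that second turn, which causes no difficulty: the game ends the moment the king is taken, so White is never called upon to make the third move. The same trick shows more generally that whenever (i,1) is White, so is (i+1,1) -- pad White's winning turns with knight round-trips or pawn pushes.

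It remains to check, exactly as in Lemma 2, that Black's single move between White's two turns cannot interfere. To rescue its king, Black would have to capture the knight on b5 or vacate e8 before White's second turn, and it can do neither. In the starting array no black piece can move to b5: the a7- and c7-pawns cover only b6, the b8-knight reaches only a6, c6, d7, the g8-knight only e7, f6, h6, and every remaining black piece is hemmed in behind its pawns. The black king is walled in by its own men on d7, d8, e7, f7, f8; freeing an escape square (say by e7-e6) is useless, since White captures on e8 during its own turn -- before Black gets to move the king -- and the intervening capture on c7 cannot be answered either, both moves being White's. So no matter what Black plays, the knight still runs b5-c7-e8.

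The one point deserving care -- and the only real obstacle -- is confirming that White's waiting move on turn one cannot backfire. It must not block the b5-c7 or c7-e8 jump (automatic: knight jumps cannot be blocked) and must not open a line against White's king on e1 (so Black cannot respond with a king capture of its own). Moving the g1-knight to f3 settles both points instantly -- it disturbs no pawn, opens no file or diagonal toward e1, and touches none of b5, c7, e8 -- so the argument of Lemma 2 carries over unchanged and (3,1) is White.
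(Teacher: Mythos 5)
Your proof is correct and takes essentially the same route as the paper: run the (2,1) knight maneuver b1--a3--b5, then c7 and e8, padding White's first turn with a harmless waiting move (the paper pushes the h2 pawn where you play Ng1--f3), and noting Black's single intervening move can neither capture the b5 knight nor free its king. Your write-up just verifies these points in more detail; the extra aside about (i,1) implying (i+1,1) is not needed for the lemma.
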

\begin{proof} White moves the b1 knight to a3 and then to b5, and pushes the pawn at h2. Black again can make any move. The White knight moves b5-c7-e8 for checkmate.\end{proof}

\begin{lemma} (3,2) is White.\end{lemma}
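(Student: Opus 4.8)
The plan is to spend White's extra move on one forcing sacrifice and then mop up with the queen, exploiting that in the opening array the Black king on e8 is so boxed in by its own men that a bishop check on f7 admits a unique reply. White's first turn is e4, Bc4, Bxf7+ (the opening pawn move both frees the bishop's path to c4 and opens the d1--h5 diagonal for the queen). This check cannot be met by blocking, since e8 is adjacent to f7; nor by a king move, since d7, e7, d8, f8 all hold Black units; and no Black piece but the king attacks f7. Hence Black is forced to answer Kxf7, and then has exactly one further move before White's next turn. The content of the lemma is that this one move does not save the exposed king.

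The mechanism is that, e4 having opened the d1--h5 diagonal, White's queen reaches h5 or f3 in a single move and from there checks the king on f7 and captures it on the following move, travelling along the diagonal h5--g6--f7 or along the file f3--f4--f5--f6--f7 respectively; and Black gets no move between White's check and White's capture. A single Black reply can obstruct at most one of these two lines, the only square common to them being f7, which carries the king; so whichever line survives yields Qh5+, Qxf7 or Qf3+, Qxf7 and the game. One also checks that e4, Bc4, Bxf7+ leaves White in no check, and that Black's undeveloped pieces cannot in one move occupy e2, f3, or g4 to obstruct the queen's approach.

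It remains to handle the lines in which Black uses its free move to flee with the king rather than to block. From f7 the king can step only to e8, e6, f6, or g6, its other neighbours being occupied by Black's own men; and because Black has now spent its move, each of these squares is lost in two White moves along a clear line: Qh5+, Qxe8 for e8; Qg4+, Qxe6 for e6; Qf3+, Qxf6 for f6; and Qh5+, Qxg6 for g6. So Black has no defence, and (3,2) is White.

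The step I expect to need real care is the second paragraph: verifying that \emph{no} single Black move---a pawn push such as g6, a developing knight move, or the queen going to e8---can both block the surviving attacking line and keep the king alive on the next White turn, and that Black cannot slip in a saving check of its own. The rest is the same short, exhaustive checking already done in Lemmas 2 and 3.
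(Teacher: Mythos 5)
Your attacking idea (e4, Bc4, Bxf7 followed by a queen raid via h5 or f3) can probably be made to work, but the pivotal claim ``Black is forced to answer Kxf7'' is false as argued, and this is a genuine gap. You reason as if Black had a single move with which to answer the check, but in Multimove Chess (3,2) Black's turn consists of two moves, so Black need not touch f7 at all: it can vacate a flight square and move the king within the same turn, e.g.\ d7--d6 (or d7--d5) followed by Kd7, or e7--e6 (or e7--e5) followed by Ke7, or it can remove the bishop without exposing the king by Ng8--h6 followed by Nxf7, leaving the king on e8. None of these defences appears in your case split (``Kxf7 then block'' / ``Kxf7 then run''), so the enumeration is not exhaustive. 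These lines do lose --- after \ldots d6, Kd7 White has Qg4 then Qxd7 along the clear g4--f5--e6--d7 diagonal; after \ldots e6 (or e5), Ke7 White has Qg4, Qg5, Qxe7, using all three moves; after \ldots Nh6, Nxf7 White has Qh5, Qxf7, Qxe8 --- but until such refutations are written out the proof is incomplete, and they are exactly the kind of verification your final paragraph defers.

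For comparison, the paper's proof sidesteps this bookkeeping by a different first turn (knight b1--c3, pawn e2--e3, queen d1--f3): the c3 knight threatens to reach e8 within White's three moves, Black cannot capture it in two moves, and the only two-move ways to relocate the king (push the d-, e-, or f-pawn and step the king to d7, e7, or f7) walk into the queen on f3. Your sacrifice line needs the extra case analysis above precisely because it spends a piece and thereby gives Black the option of using its two moves to decline or to recapture with something other than the king.
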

\begin{proof} White moves the knight b1-c3, the pawn e2-e3, and the queen d1-f3. The White knight at c3 can take the Black king in three moves, by moving to d5, then to c7, then to e8. There is no way for Black to capture the c3 knight, so it must move the king. It can do so by advancing any of the d7, e7, or f7 pawns one or two spaces and then moving the king to d7, e7, or f7 (respectively). In any of these situations the White queen at f3 can capture the Black king in two moves.\end{proof}

\begin{lemma} (3,3) is White.\end{lemma}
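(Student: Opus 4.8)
The plan is to follow the template of the proof that $(3,2)$ is White, but with a more robust opening that sets up \emph{two} independent king-capture threats, each completable in a single turn, so that Black's three moves cannot parry both. Concretely, White's first turn would again be the knight b1-c3, the pawn e2-e3, and the queen d1-f3 (or a close variant). The knight on c3 reaches e8 in exactly three knight moves (for instance c3-d5-c7-e8, or via b5, or via f6), so it threatens to capture a king that stays on e8; and if the king steps to d7, e7, or f7, the knight is still within three moves of it --- in fact only two moves from e7, via d5. Meanwhile the queen on f3 threatens Qxf7 and then Qxe8 (two moves, the f-file being clear), and it also catches the king on any of d7, e7, f7 within two moves --- for example Qxf7 then Qxe7, or Qf3xf6 then Qxf7, or Qf3-d5 then Qxd7 --- while any Black move that vacates f7, such as f7-f5, instead opens the diagonal for Qf3-h5-e8.

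I would then show Black cannot deal with the knight threat cheaply. The c3 knight cannot be captured in fewer than three Black moves: the only Black men that can ever attack c3 are the b- and d-pawns (each needing two moves to reach b4 or d4 and a third to capture) and the b8 knight (which needs at least three moves just to reach a square attacking c3), and the light-squared bishop can never reach the dark square c3 at all. So if Black takes the knight, all three moves are spent and White mates with Qxf7, Qxe8. Blocking every knight route into e8 --- whose penultimate squares are c7, d6, f6, g7 --- is likewise out of reach in the available moves, and a Black knight left on f6 merely becomes a stepping stone (c3-e4-xf6-xe8).

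Hence Black's only remaining idea is to run the king, which already uses two moves (a pawn push to vacate d7, e7, or f7, then the king move) and leaves just one spare move. I would then check the handful of cases --- king to e7, d7, or f7, with the spare move used either to slow the knight or to block the queen --- and verify that in each case one of the two threats still goes through: blocking the queen's entry square leaves the knight a capture in two or three moves, while blocking the knight's route leaves the queen's path open. Finally I would observe that Black has no counterplay fast enough to matter: after White's first turn Black cannot reach White's king on e1 within three moves (the Black queen is boxed in by its own pawns, and the knights are too far away), so even Black's most aggressive try, e7-e5, e5-e4, e4xf3 winning the queen, loses at once to c3-d5-f6-xe8.

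The step I expect to be the main obstacle is the case analysis in the ``king runs'' scenario: it is elementary but must be done with care, since Black's one free move interacts both with which square the king flees to and with which pawn was pushed to make room for it. The guiding observation is that the knight threat and the queen threat are complementary --- the pawn pushes that would hinder one of them tend to open a line for the other --- so no single Black move can cope with both.
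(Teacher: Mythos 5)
Your opening and overall dichotomy (Black must either spend all three moves capturing the c3 knight, after which Qxf7--Qxe8 wins, or must run the king) are exactly the paper's, but your king-run case analysis has a genuine gap: you enumerate the flight squares as d7, e7, f7, i.e.\ ``pawn push, king move, one spare move,'' which is the $(3,2)$ template. With \emph{three} Black moves the king can also reach d8 (push a c-, d-, or e-pawn, move the queen, then Ke8--d8) and f8 (push the e- or g-pawn, move the bishop, then Ke8--f8). These are not side cases; they are precisely the only squares the c3 knight cannot reach in three moves (d7, e7, f7 and every other square the king can reach in three moves are within the knight's range), so they are the critical cases, and they require specific queen continuations that your two stated queen threats (Qxf7 then Qxe8, or catching a king on d7/e7/f7 in two moves) do not supply: the paper finishes them with Qf3--f6--(x)e7--d8 and Qf3xf7--f8. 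As written, your ``handful of cases'' would verify only the easy squares and the proof would not close.

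A smaller inaccuracy: your inventory of Black men that can capture the c3 knight is wrong. The dark-squared bishop (e.g.\ push the e-pawn and Bf8--b4xc3), the queen (push the c-pawn and Qd8--a5xc3, or push the e-pawn and Qd8--f6xc3), and the g8 knight (g8--f6--e4xc3) can all do it in three moves, while the b8 knight in fact cannot reach a capturing square in time. Your conclusion that at least three Black moves are needed is still correct, and in every such line the f7 pawn and the f-file are untouched, so the Qxf7, Qxe8 finish survives; but the case list should be corrected, since the argument depends on checking that no capturing method also interferes with the queen's path.
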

\begin{proof}  We claim if White's first turn consists of the three moves: pawn e2-e3, queen d1-f3 and knight b1-c3, that White may take Black's king on the following turn. Note that in this position, Black cannot take White's king in three moves. The White knight at c3 can reach e8 in three moves (for example c3-d5-f6-e8), and therefore on Black's first turn it must capture the White knight or move its king so that it is not captured at e8.

Black can only capture the knight using at least three moves, either with (i) the b7 or d7 pawn (b7-b5-b4-c3 or d7-d5-d4-c3), (ii) the bishop at f8 (push g pawn and f8-g7-c3 or push e7 pawn and f8-b4-c3), (iii) the queen (push e7 pawn and d8-f6-c3 or push c7 pawn and d8-a5-c3), or (iv) the knight at g8 (for example g8-f6-e4-c3).  In all cases, the White queen has not been taken nor has the f7 pawn been moved, and hence the queen can move f3-f7-e8 to capture the king. Therefore, Black must move its king. 

Assuming Black does not take the White knight, there are only two positions that Black's king can reach in three moves that cannot be taken by the White knight at c3 on the following turn: d8 and f8. If the Black king moves to d8, one of the Black pawns in columns c, d, or e must be pushed, and the Black queen and then king moved. In any of these cases, the White queen may move f3-f6-e7-d8, taking the king. In the similar case of f8, either the Black e7 or g7 pawn must be pushed, and the Black bishop and then king moved. The White queen moves f3-f7-f8 to take the Black king.
\end{proof}

\begin{lemma} (i,j) where $i \geq 4$ and $j\geq 1$ is White.\end{lemma}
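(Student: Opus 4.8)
The plan is to exploit the fact that White moves first, so if White can capture Black's king within a single turn of at most four moves, the value of $j$ is completely irrelevant: Black never gets to move. So the whole lemma reduces to exhibiting a four-move sequence (all legal in the starting position) that ends on e8, the Black king's square.

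The natural candidate is a lone knight journey. I would use the b1 knight and the path Nb1--c3--d5--f6--e8. One checks that each step is a legal knight move and that the three intermediate squares c3, d5, f6 are all empty in the initial position (the b1 knight vacates c3's neighborhood, and d5, f6 lie in the empty center), while e8 carries the Black king, which is captured on the fourth move. Thus on its very first turn White plays Nb1--c3, Nc3--d5, Nd5--f6, Nf6$\times$e8 and wins. If $i>4$, White simply performs this four-move journey using its first four moves; the game ends immediately, so the remaining $i-4$ moves are never needed. Since White has at least $4$ moves per turn and moves first, Black (whatever $j\geq 1$ is) has no opportunity to interpose, capture the traveling knight, or move its king, and in particular cannot capture White's king first because White's king sits untouched on e1 with no Black piece having moved. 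Hence $(i,j)$ is White for all $i\geq 4$, $j\geq 1$.

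There is essentially no serious obstacle here; the only thing to verify carefully is the correctness of the knight path — that all four moves are legal knight hops and that no White piece blocks the three transit squares. (It is worth noting in passing that one should confirm the variant's rules permit White to keep moving after its knight "gives check" along the way, which is consistent with how the earlier lemmas' multi-move king hunts are conducted, and that White is never itself in check during the sequence since Black has not moved.) I would also remark that the same idea shows White wins on move one of the game, which is why no stalemate issues arise.

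Finally, I would state the conclusion cleanly: combined with Lemmas~2--5, this establishes the White entries of the table with $i\geq 4$ (and, together with those lemmas, all White entries listed), leaving the remaining Black entries to the subsequent lemmas.
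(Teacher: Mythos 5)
Your proof is correct and is essentially the paper's argument: win on White's very first turn by a four-move knight journey from b1 to e8, making $j$ irrelevant. The only difference is the route (you use b1--c3--d5--f6--e8 through empty squares, while the paper uses b1--a3--b5--c7--e8, capturing the c7 pawn en route), which is immaterial.
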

\begin{proof} The trick is that if White is given 4 moves before Black is given a single move, White can win. Move the White knight b1-a3-b5-c7-e8 to take the Black king.\end{proof}

\begin{lemma} (1,2) is Black. \end{lemma}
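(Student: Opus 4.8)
The plan is to hand Black an explicit attack on the White king, exploiting two features of the starting position. First, White's king on e1 is completely immobilized: the adjacent squares d1 and f1 hold White's own queen and bishop, and those two pieces are themselves locked in behind the c2, d2, e2 and g2 pawns, so a single White move can open at most one flight square for the king, and only among d2, e2, f2 --- after which White still needs a \emph{second} full turn to step the king off e1. Second, White moves only one piece at a time, so Black, moving two-to-one, always sees White's reply before committing its own pair of moves. (There is no White counterplay to worry about: White's pieces are far too hemmed in to threaten the Black king within the handful of moves Black will need.)

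The engine of the argument is a knight raid. If White's first move neither obstructs the square d4 nor begins to free a flight square, Black plays Nb8-c6 and Nc6-d4; the knight on d4 attacks c2 and f3, hence threatens the two three-move routes d4-c2-e1 and d4-f3-e1 to the king. White's single reply can neither block both routes --- a pawn dropped on c2 or f3 is simply captured by the knight, which then takes the king on the same turn, before any recapture --- nor get the king off e1, so on its second turn Black runs the maneuver and captures the king.

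The remaining first moves are handled by adapting this raid. If White's first move attacks or guards d4 (say e2-e3, c2-c3, Ng1-f3, or d2-d4, the last opening the d-file for the queen), or opens the flight square f2, Black replaces Nc6-d4 by a safer second move --- Nc6-e5, Nc6-b4, or Nb8-a6 then Na6-b4 --- chosen to dodge whatever capture White threatens and to land the knight where it bears on e1 and, if one has been opened, also on the king's single flight square (a knight on f3 hits e1 and d2; one on d3 hits e1 and f2; e5 is a hop from both d3 and f3; b4 is a hop from both c2 and d3). One checks case by case that White again has no one-move defense, so Black captures the king on turn two, or on turn three if White first steps the king off e1, after which the dislodged king can only shuffle between e1 and the one square it freed and is run down by the knight, aided where needed by the queen developed via e7-e6 and Qd8-g5 (or Qd8-h4) to seal its last escape squares. (It is harmless for a Black attacker to stand where White attacks it, since Black takes the king the same turn; the only rule to respect is never to leave a Black piece adjacent to the White king at the \emph{end} of a Black turn, where the king could capture it.)

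I expect the main obstacle to be exactly this last round of casework: actually exhibiting, for every White first move, a fast and safe route for the Black knight, and verifying that once the king leaves e1 Black can seal off all of its escape squares faster than White can use them. The hardest cases are those where White's single first move simultaneously opens a flight square and obstructs the knight's best route (such as e2-e3 and d2-d4); there the queen is genuinely needed as a second attacker, and the verification comes down to the tempo count --- White must spend two whole turns to advance its king one square while Black banks two moves every round --- which is what guarantees the net closes.
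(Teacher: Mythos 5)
Your overall plan is the same as the paper's: a fast Black knight raid on e1, with casework on White's single first move and a second attacker brought up when that move has freed a flight square. The d4 post is even a nice refinement in the easy cases (a knight on d4 covers e2 as well as the two routes d4--c2--e1 and d4--f3--e1, so it disposes of 1.~e2-e4 in one turn). But the proof is not complete, and the concrete claims you make in exactly the delicate cases are wrong. First, the assertion that the fallback squares e5 and b4 ``bear on the king's single flight square'' fails: the squares from which a knight attacks d2 are b1, b3, c4, e4, f1, f3, and none of these is a knight's move from b4, so after 1.~d2-d4, Nb8-c6, Nc6-b4, 2.~Ke1-d2 the knight alone cannot capture the king on Black's next turn; likewise the squares attacking e2 are c1, c3, d4, f4, g1, g3, none of which is reachable in one hop from e5 or from b4, so neither fallback post covers a king that steps to e2. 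Hence in the d-pawn and e-pawn cases the ``turn-two kill'' does not exist and everything rests on the second-attacker phase, which you defer rather than verify.

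Second, your proposed second attacker does not work in the one case where it is indispensable. After 1.~e2-e3 the square d4 is guarded, so you must post the knight on e5 (or b4), and White answers 2.~Ke1-e2; but a queen on g5 attacks d2 (through f4, e3) and a queen on h4 is blocked by the f2 pawn --- neither attacks e2 --- so ``e6 and Qg5/Qh4'' does not seal the king's square, the king is not even attacked after your second turn, and White gets a free move (with its own queen now able to come out via the opened diagonal). The paper's resolution of this case is different and essential: after the forced 2.~Ke2 Black plays d7-d6 and Bc8-g4, so the bishop attacks e2 through the empty f3 while the e5 knight covers d3, f3, e3 and e1, and one checks that White's single intervening move can neither capture either attacker nor block usefully (a blocker on f3 is taken by the bishop, which continues to e2 in the same turn). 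Similarly, for the d-pawn first moves the paper sidesteps the b4 problem entirely by playing the c-pawn push and Qd8-a5, attacking e1 along the a5--e1 diagonal so that the king cannot profitably go to d2 and any block is captured and passed in one Black turn. To turn your sketch into a proof you must actually carry out the first-move casework you postpone, and in the d- and e-pawn branches you must replace your stated auxiliary maneuvers by ones that genuinely cover the freed flight square, as the paper does.
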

\begin{proof} White has 10 initial pieces it can move: 8 pawns and 2 knights. We discuss these cases separately; (A) pawns at a2, b2, c2, g2, h2 or knight at b1, (B) pawn at f2 or knight at g1, (C) pawn at d2, and (D) pawn at e2.
\begin{itemize}
\item[](A) {In any of these cases e5 is not being attacked by White after its first move. Black's first move is knight b8-c6-e5, now able to take the king on its next turn. White is incapable of moving the king, nor may it take the knight at e5, and hence Black must win.}
\item[](B) {In both cases, White is not attacking b4, and therefore Black moves its knight b8-c6-b4. Regardless of what White does, Black may take the king on the following turn with either b4-d3-e1 or b4-d3-f2.}
\item[](C) {Black pushes the c2 pawn and moves its queen d8-a5. White may not move its king to avoid capture and only has one move with which to block, which is insufficient as Black has two moves, taking the blocking piece at b4, c3, or d2, and then taking the king at e1.}
\item[](D) {In the case in which White pushes its e2 pawn 1 or 2 spaces, Black responds by moving its knight b8-c6-e5. On Black's next turn, Black may take the king at e1, and therefore White is forced to move its king e1-e2, since it may not take the Black knight. Black responds by pushing its pawn d7-d6 and moves its bishop c8-g4. Regardless of White's next move, Black may take the king on its following turn using the knight at e5 for spaces d3, e3, f3, e1 or bishop at g4 for e2. Note that neither the g4 bishop nor the e5 knight may be taken by White in one move, nor does blocking the g4 bishop matter since it is distance 2 from e2, moving g4-f3-e2.}
\end{itemize} \end{proof}

\begin{lemma} (1,3) is Black. \end{lemma}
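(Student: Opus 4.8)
The plan is to follow the template of the $(1,2)$ argument (Lemma~7): split White's twenty possible opening moves (the eight pawns, each one or two squares, and the two knights) into the same four cases according to which pawn or knight White develops, and in each case give a Black first turn that leaves White's king under a threat which White cannot answer with a single move. The only new feature is that Black's turn now consists of three moves instead of two; typically two of them reproduce the $(1,2)$ setup and the third is a harmless waiting move such as h7-h6, of which the opening position always has several. Since Black is strictly more mobile here than in $(1,2)$ while White is no more mobile, every case should go through with room to spare, and in most lines Black in fact mates a move sooner than in Lemma~7.

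I would first dispatch Case (A), White moving one of the a-, b-, c-, g-, or h-pawns or the b1 knight, and Case (B), White moving the f-pawn or the g1 knight. In Case (A) none of White's options attacks e5 or vacates a square next to e1, so Black plays b8-c6-e5 (plus the waiting move); White's king is frozen on e1, and on the next turn Black plays the knight e5-f3-e1 or e5-d3-e1, taking the king. One only checks that White's single reply cannot do two things at once: it may attack the e5 knight (by d2-d4, f2-f4, or g1-f3) or occupy f3, but it cannot cover both mating routes, and where White parks a knight on f3 Black simply captures it en route. Case (B) is identical with b4 in place of e5: after b8-c6-b4 Black threatens b4-d3-e1, and also b4-d3-f2 should White's opening have freed f2 and White answer with a king move there; b4 is unattacked after every Case-(B) opening, the f1 bishop is still blocked so White cannot cover d3, and a pawn poke to a3 or c3 hitting b4 does not save the king.

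Case (C) is White moving the d-pawn. Now d2 is empty, so Black plays c7-c6, then queen d8-a5 (plus the waiting move), placing the queen on the open a5-e1 diagonal. White's one move can interpose only at c3 or d2, or play b2-b4, which both blocks the diagonal and attacks the queen; in each instance Black captures the interposing unit with the queen (for example a5-b4 then b4-e1, or a5-c3 then c3-e1, or a5-d2 then d2-e1), all inside three moves, and if White ignores the threat Black plays queen a5-e1 at once. A king move to d2, possible here since d2 is vacated, loses immediately to queen a5-d2.

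Case (D), White moving the e-pawn, is the delicate one and I expect it to be the main obstacle, exactly as in Lemma~7. After b8-c6-e5 the knight threatens the king on e1 as before, but because e2 is now vacated White may reply with king e1-e2, and a knight on e5 cannot reach e2 in two moves; here the three moves per turn are essential. Black chases with e5-f3-d4 and then d4-e2, taking the king (alternatively e5-f3-g1 then g1-e2, or e5-d3-c1 then c1-e2, capturing a White piece along the way). Against every other White reply the king stays on e1 and Black mates in two via e5-f3-e1 or e5-d3-e1, capturing on f3 or d3 if White has put a piece there. Combining the four cases proves the lemma; the only real labor is the routine verification that in each case \emph{every} White opening, and then \emph{every} legal White reply on the following move, falls under one of the responses above.
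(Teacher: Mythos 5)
Your argument is correct, but it is a genuinely different proof from the paper's. The paper makes no case distinction at all: regardless of White's first move, Black plays b8-c6, g8-f6 and a7-a6, and then observes that a Black knight can reach, in three moves, \emph{every} square of the box c1--c3, d1--d3, e1--e3, f1--f3, g1--g3, while White, with only two single moves available before Black's second turn, can neither capture either knight nor get its king out of that box (one of the two moves must be a pawn push just to let the king move at all). You instead transplant the case split of Lemma~7 on White's opening move, letting one attacker (the b8 knight in cases (A), (B), (D), the queen via c7-c6 and d8-a5 in case (C)) deliver the capture, with the extra third move used either as a waiting move or to lengthen the knight chase when the king escapes to e2 in case (D) (note that the chase e5-f3-d4 followed by d4xe2, like your alternatives through g1 or c1, is exactly three knight moves and must be, and can be, executed within a single Black turn -- spread over two turns it would allow the king to run again). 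Your verifications go through: in each case White's single reply can neither capture the attacking piece, nor move the king off the attacked squares, nor usefully interpose, since interposed pieces are simply captured en route and, with king capture as the winning condition, defending squares is irrelevant. The trade-off is that the paper's two-knight ``box'' argument is uniform and essentially one sentence, at the price of checking knight distances to fifteen squares, whereas your version wins a turn earlier in every line and gives explicit forcing sequences, at the price of enumerating White's openings and replies case by case.
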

\begin{proof} Regardless of White's first move, on Black's first turn it moves its knights b8-c6 and g8-f6, and its a7 pawn to a6. In three moves, any of the spaces in the box defined by: c1-3, d1-3, e1-3, f1-3, g1-3 can be taken by a Black knight. In two moves, White's king may not escape the above box, nor may it take either knight.\end{proof}

\begin{lemma} (2,3) is Black. \end{lemma}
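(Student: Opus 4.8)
The plan is to show that Black, exploiting its three--move turns, can reach after its first turn a position from which the White king is lost whatever White does with its next two moves; the architecture of the argument parallels the earlier ``Black'' lemmas (Lemmas~7 and~8), but now Black must overpower White's \emph{two} defensive moves rather than one. Concretely, Black will spend its first turn developing both knights (together, if needed, with one pawn push or a queen move) so that a Black knight can capture the king at~e1 within three moves, and so that the same knight tour reach, or a second knight, also covers every square the White king could still run to; Black then mates on its second turn.

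First I would dispose of White's opening. Since White moves first, White has already made two moves before Black replies, and those moves may have \emph{pre-opened} an escape route for its king (for instance, pushing the e--pawn on move one makes a later king walk e1--e2--d3 or e1--e2--f3 possible, whereas otherwise the White king is completely boxed in by its own back rank and pawns). So I would case-split on White's two opening moves --- organised, as in the proof of Lemma~7, according to which central pawns and pieces White has touched --- and let Black choose its three developing moves \emph{adaptively}, tailoring the knight net to the escape squares White has made available. In the many branches where White's opening touches only the flanks, it is irrelevant: from the starting array White's force is farther from e8 than Black's developed knights will be from e1, so White's two opening tempi neither threaten Black's king nor adequately pre-defend e1. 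The handful of genuinely central openings (pushing the d--pawn or e--pawn, or developing b1 or g1 toward the centre) get their own short sub-cases.

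Next comes the finishing argument. After Black's setup I would write down the set $S$ of squares a Black knight can reach in at most three moves from its posts (three knight moves from a central square such as e5 sweep out a broad connected region that already includes all of d1, e1, f1, g1 together with their neighbours). Then I would verify three things: (i) the White king, moving at most twice --- and at most once more than whatever its opening freed --- cannot leave $S$; (ii) White cannot capture the attacking knight(s) in two moves, since they sit beyond the reach of White's still-undeveloped back-rank pieces; and (iii) interposition is useless against a knight. Hence White's two reply moves cannot simultaneously save the king and eliminate the threat, and on Black's second turn a knight takes the king.

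The main obstacle is precisely White's extra move on the reply combined with its prior turn: two king steps (or a square-freeing move plus a king step) can carry the king out of a naive distance-one or distance-two neighbourhood, and a block-plus-step can kill one threat while dodging another. So the delicate part is picking Black's three developing moves so that the net $S$ is genuinely wide enough for \emph{whichever} escape route White's opening left open, and so that no single White move both removes an attacker and doubles as a king escape; after that, the remaining case check over White's two-move replies is finite but fiddly. A secondary point to record is that Black's aggressive first turn never allows White to capture the Black king first --- immediate because e8 stays out of range of White's slow opening --- but it should be stated explicitly.
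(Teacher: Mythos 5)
There is a real gap here: your plan is essentially the $(1,3)$ argument (a knight net over the box c1--g3) made adaptive, but that scheme does not survive the fact that White gets \emph{four} moves before Black's second turn (two on its opening turn plus two on its reply). Your step (ii), ``White cannot capture the attacking knight(s) in two moves,'' fails in the relevant sense: White's opening turn can pre-prepare the capture, e.g.\ e2--e4 and Bf1--b5 on turn one, then Bxc6 on turn two with a spare move left to block. Moreover, after Nb8--c6 and Ng8--f6 only the c6 knight is a genuine three-move threat against e1 (the knight distance from f6 to e1 is four), so once c6 is removed the knight net has no follow-up, and a king walk to d1 (after the White queen vacates), to f1 (after the bishop moves), or to g2/e4 also slips outside what the knights can punish. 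This is precisely why the paper's proof builds Black's first turn around e7--e6, Qd8--f6, Nb8--c6: in its cases (D)--(H) (the c6 knight taken by knight, bishop, rook, queen, or pawn) and (I)--(L) (king escapes to d1, e4, f1, g2) the White king is captured by the \emph{queen}, not by a knight. Your proposal treats the queen move as optional and never secures such a second, independent attacker, which is the missing key idea.

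The second gap is your closing remark that Black's king safety is ``immediate because e8 stays out of range of White's slow opening.'' That is false: with four moves before Black's second turn, White has real king-capture threats, e.g.\ Nb1--a3--b5 on turn one continuing Nc7, Nxe8, or e2--e4 with Qh5 (or Qf3/Qg4) continuing Qxf7, Qxe8. The paper spends its cases (A1), (A2), (B1)--(B3), (C) on exactly these openings, prescribing specialized Black replies (capturing the advanced knight with pawns, taking the queen by Ng8--f6--g4, blocking with g7--g6, etc.), and only then runs the standard-opening analysis. Without explicit parries of these lines, your ``adaptive first turn'' has no concrete content where it is most needed, so the proof as proposed does not go through.
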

\begin{proof} The strategy is to usually use the following standard opening for Black: pawn e7-e6, queen d8-f6, knight b8-c6. However, Black will use a specialized opening if White's first two move sequence is part of a four-move sequence that can take the Black king, given that Black uses the standard opening ((A2), (B2), (B3), (C)), or in situations in which the standard opening is inefficient ((A1), (B1)). Those two-move sequences for White are: (A1) knight from b1 to a4, (A2) knight from b1 to b5, c4, d5, or e4, (B) push the e2 pawn and move the White queen to f3 (B1), g4 (B2), or h5 (B3), and (C) push the c2 pawn one space and move the White queen to b3. We will demonstrate that Black can win in all of these situations:

\begin{itemize}
\item[] (A1) and (A2) Black takes the White knight with two moves of a pawn (if it only requires one, push the h7 pawn also), and then moves its knight b8-c6. In three moves, this knight can take the White king, regardless of the Black pawn structure, so as White cannot take the Black knight, it must move its king. The only three places to which the White king can move are d7, e7, and f7. In any of these cases, the Black knight can take the White king in three moves, without ever using a4, b5, c4, d5, or e4, where there may be a Black pawn.

\item[] (B1) Black pushes its pawn e7-e5, pushes its queen from d8-f6, and knight from b8-c6. Note that from this position, the Black knight at c6 may capture the White king at e1 in three moves, therefore White must take the knight or move its king to avoid a loss.

Assume White takes the Black knight. If White uses its queen to take both the Black queen and knight, Black may take White's king by bishop f8-b4-d2-e1. If White takes the knight at c6 with its queen or bishop, and either leaves the king where it is or moves its king to either d1 or e2, we can always take the White king with the Black queen, even if White can use a move to block the f column.

If White does not take the Black knight at c6, it must move its king to escape the Black knight. The White king can move to e2, e3 (if the e2 pawn is moved to e4), d1, d2 (if the d2 pawn is moved), d3, and f1 (if the bishop is moved). If the king takes two moves and goes to e3 or d3, it can be captured by the Black queen. If it moves to d2 or e2, it can be captured by the Black knight. It Black moves the bishop so that the king may move to f1, the Black queen takes the White king. So Black must move its king to d1. If White then moves its queen to f6, its king will be taken by the c8 bishop via pawn to d6 and bishop c8-g4-d1. If White moves its king to d1 and its queen to somewhere other than f6, Black moves the queen from f6 to f3 to d1, possibly stopping to take the White queen. Now we assume the king has been moved to d1 and the White queen is not moved. In this case, Black takes the White queen at f3 with its own queen at f6 and then takes the king, possibly stopping at e2 to take a piece.

\item[] (B2)  Black uses its knight to capture White's queen: g8-f6-g4 and moves its second knight b8-c6. Either knight can capture the king at e1 in three moves, and it is impossible to capture both knights in two moves as each knight requires two moves by White to be captured; therefore the king must move. In two moves, on its second turn, White's King can only reach d1-3, e1-3, f1-3; the combination of knights can reach all of those positions, and neither can be taken provided the king has used a move since they each require two moves from White to be captured, and therefore Black takes the king.

\item[] (B3) Black pushes the pawn g7-g6 to block the queen, and moves both knights b8-c6 and g8-f6. The knight at c6 can take the king at e1, and therefore the knight must be taken or the king moved. The knight at c6 may be taken by the queen h5-(b5,c5,d5)-c6 or by the bishop f1-b5-c6. In either case, Black takes the queen or bishop at c6 with the pawn at d7-c6, then the queen moves d8-d2-e1 to take the king. If the White king tries to move, it is impossible for it to escape the range of both Black knights, neither of which can be taken, so Black wins.

\item[] (C) On its first turn, Black moves a pawn d7-d5, moves the queen d8-d6 and moves a knight b8-c6. The knight can take the White king in three moves, so White must either take the knight or move its king. The only way White can take the Black knight is using two moves of its queen, allowing the Black queen to take the White king on the next turn. If instead White moves its king, the Black queen can take it at d2, e2, and f2 (if the relevant pawns are moved) and at c2. If the White king moves to d1, it can use its remaining move to try to defend itself from the Black queen by moving a pawn, but it is to no avail, the Black queen or the Black bishop at c8 will still take the White king (the bishop is used if the White e2 pawn is pushed).
\end{itemize}

Now we assume that Black uses the opening pawn e7-e6, queen d8-f6, knight b8-c6, and that during its first turn White has not made two moves as enumerated above in (A1-2), (B1-3), and (C). 

Since Black has progressed a knight from b8-c6, and every situation in which White makes two moves toward a four-mate move on Black's standard opening is handled above, on its second turn White must capture the Black knight or move its king. It must use its first turn to prepare, but it may not use the combinations above. If White captures the knight, it does so in three or four moves using (D) a knight, (E) a bishop, (F) a rook, (G) the queen, or (H) a pawn. If it uses only three moves, the fourth can be used to block before Black starts its second turn.

\begin{itemize}
\item[] (D) Using the knight at b1 has been covered by (A1-2) above. Using the White knight at g1 to capture Black's knight at c6 takes three moves, leaving White with one additional move. For any additional move White makes (with its pawns or c6 knight), the queen can move from f6 to e1 in three moves and capture the king as White cannot sufficiently block Black's queen, nor can its king move.

\item[] (E) In order for the White bishop to capture the Black knight at c6, either the e2 or g2 pawns must be pushed, and bishop moved to b5 or g2 respectively. White can then capture the knight with the bishop and has one remaining move. White may move the king to e2 or f1, push its f2 pawn, move its g1 knight to f3, or block with its c6 bishop or d1 queen. In all cases the Black queen can still capture the White king in three moves.

\item[] (F) To capture the knight with a rook requires four moves leaving the queen to effortlessly capture the king via f6-f2-e1. A pawn must be pushed two, say a2-a4, then rook a1-a3-c3-c6. The rook at h1 is equivalent.

\item[] (G) In order for the White queen to take the knight at c6, one of the c2, d2, or e2 White pawns must be pushed. In all of these cases, the queen can take the knight in three or four moves and may not take Black's queen. If it takes four moves, the black queen may move f6-f2-e1 and take the king, so we now only consider those cases where it takes three moves. They are queen d1-a4-c6 or d1-f3-c6. Going through f3 is case (B1) above, so we only must consider going through a4. In this case, White pushes its c2 pawn and may either block or move the king with its fourth move. If White moves the king to d1, Black may move its queen f6-f2-e1-d1 or f6-f3-e2-d1 to take the White king. If White blocks with its f2 pawn, g1 knight, or queen, the Black queen can always get to f2 in two moves and take the king on its third.

\item[] (H) White can capture the c6 knight with either the b2 or d2 pawn, say b2-b4-b5-c6. The queen can then capture the king via f6-f2-e1, possibly stopping to take a piece along the way. If the White king moves it is still easily taken by the Black queen.
\end{itemize}

We now consider the case in which the Black knight is not taken, and therefore the White king must move to avoid capture by the Black knight. The only positions that the king may reach in two turns (four total moves) that cannot be taken by the knight at c6 on its next turn (three moves) are d1, e4, f1, and g2; we consider these cases separately in (I), (J), (K), (L) respectively.

\begin{itemize}
\item[](I) For the White king to move to d1, the White queen must be moved. To free the queen, either the c2, d2, or e2 pawn must be moved (cases (i), (ii), (iii), respectively):
\begin{itemize}
\item[](i) Suppose the c2 pawn is pushed, the queen may move to c2, b3, or a4, and the king moves to d1. The case of the queen to b3 has been discussed above in (C), so we only consider a4 and c2. In both cases, consider the attack by Black f6-f3-e2-d1. White may only block this attack with its queen, or by pushing the f2 pawn to f4. In the case of a block by the queen, by a pawn, or by the g1 knight, Black may move f6-d4-d2-d1 or f6-f2-e1-d1 and capture the king.
\item[](ii) If the d2 pawn is pushed, the queen may move to d2 if the pawn is pushed to d3, or d2 or d3 if the pawn is pushed to d4, and the king moves to d1. White has one additional move, but Black can move f6-f2-e1-d1 or f6-d4-(d2 or d3)-d1 and capture the king, unless the White queen is at d2 with a White pawn at d3, and White pushes the f2 pawn. In that case, the Black queen moves f6-h4-e1-d1 to take the king.
\item[](iii) If the e2 pawn is pushed, the queen then moves to e2 (we considered the cases of f3, g4, and h5 in (B1-3) respectively above) and the king moves to d1. Consider the attack by Black's queen of f6-c3-c2-d1; White has one additional move to try and block this attack and must place a piece at either d4 or e5 to do so. The only pieces that can reach those positions are the d2 or e2 pawns (provided the initial e2 pawn was pushed to e4). In the case that the pawn is pushed d2-d4 to block, Black can capture via f6-d4-d1. In the case that the pawn e4-e5 is pushed, Black can capture via f6-e5-e2-d1.
\end{itemize}

\item[](J) If the White king moves to e4, it must have gotten there in four moves by pushing the d2, e2, or f2 pawn. In any case the Black queen takes the exposed White king.

\item[](K) In order for the White king to move to f1, either the e2 or g2 pawn must be pushed, the bishop on f1 moved, and then the king moved to f1. White has one additional move available, but none can sufficiently block nor take the Black queen. The Black queen can take two pieces in the f column before capturing the king at f1. Before White's final turn, there is only a pawn at f2, and with one additional move White can add at most one piece to the f column.

\item[](L) There is one way in which the White king can get to g2. It is if both the f2 and g2 pawns are pushed one or two spaces and the king moves e1-f2-g2. The Black queen can move f6-f4-f3-g2 to capture the king. 
\end{itemize}
\end{proof}

\begin{lemma} (i,j) where $i\leq 3$ and $j\geq 4$ is Black.\end{lemma}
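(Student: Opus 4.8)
The plan is to show that White's first turn --- at most three moves from the opening array --- can neither capture nor even threaten Black's king, and that Black then captures White's king on its own first turn in at most four moves. Since the hypothesis ``$i\le 3$'' already subsumes $i=1,2,3$ and Black's winning sequence will use only four of its $j\ge 4$ moves, no separate monotonicity argument is needed: it suffices to analyse White's opening turn and Black's immediate reply.

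First I would record, exactly as in the reasoning behind Lemmas 2--6, that from the opening position no White piece can reach a square from which it captures the king on e8 in fewer than four moves: the fastest routes are the knight tour b1--a3--b5--c7--e8 and the queen manoeuvre e2--e3, Qd1--f3, Qf3$\times$f7, Qf7$\times$e8, each four moves long, because every neighbour of the Black king is shielded by a Black man and the White queen cannot even leave d1 without a preparatory pawn move. Moreover, setting up any one-move threat against e8 also costs White at least three moves, so if White spends part of its turn relocating its king it cannot simultaneously menace Black's king. Black's key resource is the mirror of Lemma 6: the b8-knight reaches e1 in exactly four moves via b8--a6--b4--c2--e1, and White cannot interfere, since nothing it does in three moves touches b8, and a knight simply leaps over --- or captures --- any single piece White leaves on a6, b4 or c2 (a pawn still on c2, say, or a knight placed there by b1--a3--c2, or one reaching b4 by b1--c3--d5--b4). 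Hence if the White king is still on e1 after White's turn, Black plays Nb8--a6--b4--c2--e1, capturing en route if necessary, and wins outright --- it may ignore any White threat, because taking the king ends the game first.

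Next I would enumerate the squares the White king can occupy after at most three moves. The crucial point is that all five neighbours of e1 --- d1, d2, e2, f1, f2 --- are initially occupied by White's own men, so the first king step always costs at least one preparatory move; consequently the king can take at most two steps, and a short check shows it can only reach d1, d2, e2, f2 (one step, one White move to spare), f1 (one step, spending all three moves to push the bishop's pawn, move the bishop, and move the king), or c2, c3, d3, e3, f3, g3 (two steps, no move to spare). For each of these eleven squares there is a knight route of length at most four that lands on the king: Ng8--f6--e4--c3 captures on d1 or e2; Ng8--f6--e4 captures on c3, d2, f2 or g3; Ng8--f6--e4--g3 captures on f1; Nb8--a6--b4 captures on c2 or d3; Ng8--f6--d5 captures on e3; and Nb8--c6--e5 captures on f3. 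In every line the transit squares (a6, b4, c6, e4, d5, e5, c3, g3) are empty at the start, and since White has at most one spare move --- a pawn push, a knight hop, or, once a pawn has opened its diagonal, a bishop move --- that spare move can neither block a knight's jump nor survive being parked on a transit or target square, where the Black knight simply captures it in passing.

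The main obstacle is the exhaustiveness of the two bookkeeping steps: arguing rigorously that e1's fully occupied neighbourhood forces a preparatory move before any king move (hence at most two king steps in three, and that the resulting list of twelve squares is complete), and then checking, in the four one-spare-move cases, that no single extra White move of any kind can spoil the chosen knight route. Both are routine once set up, but that is where the real work sits; the remaining ingredients --- that White can neither capture nor threaten Black's king first, and that a lone interposed piece never blocks a knight --- are immediate.
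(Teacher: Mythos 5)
Your proposal is correct and follows essentially the same route as the paper: White's three moves can neither capture Black's king nor its knights, every neighbour of e1 is occupied so a preparatory move must precede any king step and the king therefore stays within a small region near e1, and a Black knight then lands on whatever square the king occupies in at most four moves (the paper phrases the containment via the box c1--g3 and asserts knight coverage of all its squares, while you enumerate the reachable squares and give explicit routes). Two small points to fold into your ``short check'': castling must be excluded explicitly (kingside castling alone needs three preparatory moves, hence four in total, as the paper notes), and in your list d1 in fact consumes all three White moves (pawn, queen, king) while c2 is not reachable at all --- both harmless to the argument.
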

\begin{proof} Note that White cannot take either of Black's knights nor Black's king in its first three moves (if Black has never moved). In at most four of Black's moves however, Black can put one of its knights on any of the squares in the box defined by: c1-3, d1-3, e1-3, f1-3, g1-3. So after its first three moves, White must make sure that its king is on none of those squares. This is impossible; in order for the king to move at all, a pawn must be pushed, giving the king only two moves - not enough to escape the box. As well, it would take three moves to prepare for castling (moving a pawn and then bishop, and knight). So the White king is taken. \end{proof}

\end{document}